\documentclass[11pt]{amsart}

\usepackage{amssymb,amsmath,latexsym, amscd}
\input cyracc.def

\newtheorem{theorem}{Theorem}[section]
\newtheorem{lemma}[theorem]{Lemma}
\newtheorem{proposition}[theorem]{Proposition}
\newtheorem{definition}[theorem]{Definition}

\def\s{\sigma}

\begin{document}

\title{Exotic left orderings of the free groups from the Dehornoy ordering}

\date{\today}

\author[Adam Clay]{Adam Clay}
\address{Department of Mathematics\\
University of British Columbia \\
Vancouver \\
BC Canada V6T 1Z2} \email{aclay@math.ubc.ca}
\urladdr{http://www.math.ubc.ca/~aclay/} \maketitle

\begin{abstract}
We show that the restriction of the Dehornoy ordering to an appropriate free subgroup of the three-strand braid group defines a left ordering of the free group on $k$ generators, $k>1$, that has no convex subgroups. 
\end{abstract}

A group $G$ is said to be left-orderable if there exists a strict total ordering of its elements such that $g<h$ implies $fg<fh$ for all $f, g, h$ in $G$.  To each left ordering $<$ of a group $G$, we can associate the set $P = \{ g \in G | g>1 \}$, which is called the positive cone associated to the left ordering $<$.  The positive cone $P$ satisfies $P \cdot P \subset P$, and $P \sqcup P^{-1} \sqcup \{1 \} = G$.  Conversely, any subset $P$ satisfying these two properties defines a strict total ordering of the elements of $G$, via $g<h$ if and only if $g^{-1}h \in P$.  Any ordering defined in this way is easily seen to be invariant under left multiplication.   

We may strengthen our conditions on a left ordering $<$ of $G$ by requiring that for all $g, h >1$ in $G$, there must exist a positive integer $n$ such that $g<hg^n$.  In this case, the ordering is called Conradian (after the work of Conrad in \cite{PC59}).  It has since been observed that, equivalently, we may ask that this condition hold for $n=2$ \cite{NF07}.

Finally, the strongest condition we may require of an ordering $<$ of $G$ is that the ordering be invariant under multiplication from both sides, that is, $g<h$ implies $fg<fh$ and $gf<hf$ for all $f, g, h$ in $G$.  Equivalently, we may require that the positive cone associated to the ordering $<$ of $G$ be preserved by conjugation.  If either of these equivalent conditions is satisfied by the ordering $<$ of $G$, then the ordering is said to be a bi-ordering.

An important structure associated to a given left ordering $<$ of $G$ is the set of convex subgroups of $G$.  A subgroup $H \subset G$ is said to be convex in $G$  (with respect to the ordering $<$) if whenever $f, h$ are in $H$ and $g$ is in $G$, the implication $f<g<h \Rightarrow g \in H$ holds.  
 
Owing to work of Conrad and H\"older, the convex subgroups of bi-orderings and Conradian orderings are very well understood \cite{PC59}.  This leaves us with understanding the set of convex subgroups for the case of left orderings that are neither bi-orderings, nor Conradian orderings.  This problem seems to be quite difficult, as constructing Conradian orderings and bi-orderings of a group $G$ is in general somewhat easier than constructing left orderings of a group that are not Conrad orderings.

The two primary methods for constructing non-Conradian orderings of a group $G$ are given by the following proposition and theorem.

\begin{proposition}
\label{prop:con}
Let $G$ be a group, $K$ a subgroup of $G$ left ordered by the ordering $\prec$, and $G/K$ the set of left cosets of $K$ in $G$.   Suppose that $G/K$ is ordered by the ordering $\prec'$, satisfying $gK \prec' hK$ implies $fgK \prec' fhK$ for all $f, g, h$ in $G$.  Then a left ordering $<$ can be defined on $G$, according to the rule:  For every $g$ in $G$, $1<g$ if $g \in K$ and $1 \prec g$, or if $g \notin K$ and $K \prec' gK$. 
\end{proposition}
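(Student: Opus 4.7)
The plan is to verify the two defining properties of a positive cone for the set $P = \{g \in G : 1 < g\}$ with $<$ defined by the rule of the proposition, namely that $P \cdot P \subset P$ and $P \sqcup P^{-1} \sqcup \{1\} = G$. The proposition then follows from the general correspondence between positive cones and left orderings described in the introduction.

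For trichotomy, I would proceed by cases on whether a given $g \in G$ lies in $K$. If $g \in K \setminus \{1\}$, trichotomy is inherited from $\prec$ on $K$. If $g \notin K$, then $gK \neq K$, so by the total order $\prec'$ on $G/K$ either $K \prec' gK$ or $gK \prec' K$; applying the left-invariance of $\prec'$ to the latter yields $K \prec' g^{-1}K$. A symmetric argument shows these alternatives are mutually exclusive, so exactly one of $g, g^{-1}$ lies in $P$.

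For closure under products, take $g, h \in P$ and split into cases. If $g, h \in K$ the product $gh$ lies in $K$ and is positive in $\prec$. If exactly one of $g, h$ lies in $K$, say $g \in K$, then $gK = K$, so $ghK = hK \succ' K$ by the positivity of $h$, and $gh \notin K$. The symmetric subcase is similar using left-invariance of $\prec'$ to translate $K \prec' gK$ by $h$ on the left (and observing $ghK = gK$ when $h \in K$). The case where both $g, h \notin K$ is the main point: by left-invariance, $K \prec' gK$ and $K \prec' hK$ give $gK \prec' ghK$, and chaining with $K \prec' gK$ yields $K \prec' ghK$ whether or not $gh \in K$. In fact this same chain rules out the possibility $gh \in K$, since $gh \in K$ would force $ghK = K$, contradicting $K \prec' ghK$. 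Hence $gh \notin K$ and $gh \in P$.

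The only place requiring a delicate argument is this last case where $g, h \notin K$, because one must use left-invariance of $\prec'$ together with transitivity to simultaneously locate $ghK$ relative to $K$ \emph{and} exclude the possibility that $gh$ falls into $K$; every other case is a direct translation of positivity under a coset that equals $K$. Once these cases are assembled, defining $<$ by $g < h \Leftrightarrow g^{-1}h \in P$ produces the desired left ordering of $G$.
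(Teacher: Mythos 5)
The paper states this proposition without proof, so there is no argument of the author's to compare against; your write-up is the standard verification that the prescribed set $P$ is a positive cone, and it is essentially correct. The trichotomy argument and the main case of closure (both $g,h\notin K$) are handled properly: using left-invariance of $\prec'$ to get $gK\prec' ghK$ and chaining with $K\prec' gK$ does double duty, locating $ghK$ above $K$ and excluding $gh\in K$ at the same time.

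One intermediate claim is false as stated, although the conclusion it supports survives. In the mixed case $g\in K$, $h\notin K$ you assert $ghK = hK$. Since $K$ is only a subgroup and is not assumed normal, $ghK = hK$ is equivalent to $h^{-1}g^{-1}h\in K$, which need not hold: left cosets are not stable under left multiplication by elements of $K$. (In this paper the proposition is applied in situations where $K$ can be non-normal, so the distinction matters.) The correct justification is the one you already use in the adjacent subcases: apply the invariance hypothesis with $f=g$ to $K\prec' hK$ to obtain $gK\prec' ghK$, and then use the true equality of cosets $gK=K$ to conclude $K\prec' ghK$; together with $gh\notin K$ (which does follow from $g\in K$, $h\notin K$) this puts $gh$ in $P$. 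With that one repair, and the routine observations that $1\notin P$ and that $g\in K\Leftrightarrow g^{-1}\in K$, the proof is complete.
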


\begin{theorem}
\label{th:con}
 (Conrad, \cite{PC59}) A group $G$ is left-orderable if and only if $G$ acts effectively by order preserving automorphisms on a linearly ordered set.
\end{theorem}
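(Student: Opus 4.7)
The plan is to prove each direction of the equivalence separately, starting with the easier forward direction and then focusing on the more substantive reverse direction.

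For the forward direction, suppose $G$ is left-orderable with order $<$. I would take the linearly ordered set to be $(G, <)$ itself and let $G$ act on it by left multiplication. The defining property of a left ordering is exactly that this action preserves the ordering, and the action is effective (indeed free) because if $g \cdot h = h$ for some $h$, then $g = 1$. That disposes of one direction.

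For the reverse direction, suppose $G$ acts effectively by order-preserving automorphisms on a linearly ordered set $(X, \prec)$. The strategy is to produce a positive cone $P \subset G$ satisfying $P \cdot P \subset P$ and $P \sqcup P^{-1} \sqcup \{1\} = G$. Invoke the axiom of choice to pick a well-ordering of $X$, so that $X = \{x_\alpha\}_{\alpha \in \Lambda}$ for some well-ordered index set $\Lambda$. For each non-identity $g \in G$, effectiveness guarantees that some $x \in X$ is moved by $g$, so there is a least index $\alpha(g) \in \Lambda$ with $g \cdot x_{\alpha(g)} \neq x_{\alpha(g)}$. Declare $g \in P$ if $g \cdot x_{\alpha(g)} \succ x_{\alpha(g)}$, and $g \in P^{-1}$ otherwise. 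Disjointness of $P$, $P^{-1}$, and $\{1\}$ follows from trichotomy of $\prec$, together with the observation that $g$ and $g^{-1}$ fix exactly the same points of $X$ and that when $g \cdot x_{\alpha(g)} \succ x_{\alpha(g)}$, applying the order-preserving $g^{-1}$ yields $x_{\alpha(g)} \succ g^{-1} \cdot x_{\alpha(g)}$.

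The main work will be verifying closure under multiplication: if $g, h \in P$ then $gh \in P$. Set $\alpha_g = \alpha(g)$ and $\alpha_h = \alpha(h)$, and split into the cases $\alpha_g < \alpha_h$, $\alpha_g > \alpha_h$, and $\alpha_g = \alpha_h$. In each case I would show simultaneously that $\alpha(gh) = \min(\alpha_g, \alpha_h)$ and that $gh \cdot x_{\alpha(gh)} \succ x_{\alpha(gh)}$. For the first two cases the argument is essentially automatic once one observes that whichever of $g, h$ has the larger $\alpha$-value fixes the lower-indexed point; order-preservation of the other factor then finishes the calculation. The most delicate case is $\alpha_g = \alpha_h = \alpha$, where one must chain the two strict inequalities $h \cdot x_\alpha \succ x_\alpha$ and $g \cdot x_\alpha \succ x_\alpha$ through $g$'s order-preserving property to conclude $gh \cdot x_\alpha = g(h \cdot x_\alpha) \succ g \cdot x_\alpha \succ x_\alpha$.

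The principal obstacle I anticipate is purely notational rather than conceptual: one has to keep straight which factor fixes which $x_\beta$ for $\beta < \min(\alpha_g,\alpha_h)$ in order to legitimately declare $\alpha(gh) = \min(\alpha_g, \alpha_h)$ in each subcase. Once the bookkeeping is in place, the proof reduces to a short exercise in manipulating order-preserving bijections on a well-ordered index set, and the resulting positive cone $P$ yields, by the construction reviewed at the start of the paper, a left ordering of $G$.
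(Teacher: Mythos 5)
The paper does not actually prove this theorem; it is stated as a citation to Conrad, with the reader referred to \cite{PC59} or \cite{NF07} for the construction. Your argument is the standard proof of this result and it is correct: the forward direction via the left regular action on $(G,<)$ is immediate, and the reverse direction via well-ordering $X$ and declaring $g$ positive when the first point it moves is moved up is exactly the classical construction. The one place worth being careful, which you flag, is the case $\alpha(g)>\alpha(h)$ in verifying $P\cdot P\subset P$: there $g$ need not fix the point $h\cdot x_{\alpha(h)}$, so "fixing the lower-indexed point" alone does not finish it; rather one uses that $g$ fixes $x_{\alpha(h)}$ and is order-preserving, so $h\cdot x_{\alpha(h)}\succ x_{\alpha(h)}$ gives $g(h\cdot x_{\alpha(h)})\succ g(x_{\alpha(h)})=x_{\alpha(h)}$. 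With that reading, all three cases go through and $\alpha(gh)=\min(\alpha(g),\alpha(h))$ as you claim, so the positive cone axioms hold and the proof is complete.
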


In both of these cases, at least some of the convex subgroups of the constructed ordering are obvious.  In Proposition \ref{prop:con}, the subgroup $K$ is a convex subgroup in the left ordering $<$ of $G$.  In Theorem \ref{th:con}, the stabilizers under the $G$-action of points in the given linearly ordered set correspond to convex subgroups (see \cite{PC59} or \cite{NF07} for details of the construction). In light of the fact that both of these known methods for producing left orderings of a group result in an ordering that (often) contain convex subgroups, it is quite surprising to find that some non-Conradian left orderings may contain no proper, nontrivial convex subgroups whatsoever.  In this paper, we will left order the free groups of finite rank in a way so that the free group contains no proper, nontrivial convex subgroups with respect to our constructed ordering.  The construction relies heavily on the Dehornoy ordering of the braid group $B_3$.

The existence and a construction of such orderings of the free groups seems to have appeared only in \cite{SM85}.  The construction there, unlike our present setting, deals with creating a very unusual effective action on the rationals.  Our present approach is in simpler algebraic terms.

It is also worth noting that admitting a Conradian or bi-ordering that has no proper, nontrivial convex subgroups is a very restrictive condition on the group $G$, as the following theorem shows.
\begin{theorem} \cite{PC59}
Suppose that $G$ admits a Conradian or bi-ordering which has no proper, nontrivial convex subgroups.  Then $G$ is a subgroup of $(\mathbb{R}, +)$. 
\end{theorem}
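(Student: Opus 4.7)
The plan is to reduce both cases to H\"older's theorem, which asserts that any Archimedean ordered group (one in which, for every $g, h$ with $1 < g$, there is some positive integer $n$ with $h < g^n$) is abelian and embeds as an ordered subgroup of $(\mathbb{R}, +)$. The task in each case is therefore to show that the hypothesis of no proper, nontrivial convex subgroups forces the ordering to be Archimedean.

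For the bi-ordering case, given $g > 1$ I would define
\[
C(g) = \{ h \in G : g^{-n} < h < g^n \text{ for some } n \geq 1 \}.
\]
This set is manifestly convex and contains $g$, and bi-invariance makes it a subgroup: if $h_1, h_2 > 1$ satisfy $h_i < g^{n_i}$, then right-multiplying the first inequality by $h_2$ and left-multiplying the second by $g^{n_1}$ yield $h_1 h_2 < g^{n_1 + n_2}$, with the other closure conditions handled similarly. The hypothesis then forces $C(g) = G$ for every $g > 1$, which is precisely the Archimedean condition, and H\"older's theorem concludes.

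For the Conradian case I would invoke Conrad's structure theorem \cite{PC59}: the convex subgroups of a Conradian ordering form a chain, and for every pair of consecutive convex subgroups $K_1 \subsetneq K_2$ the subgroup $K_1$ is normal in $K_2$ and the induced ordering on $K_2/K_1$ is Archimedean, hence a subgroup of $(\mathbb{R},+)$ by H\"older. Under our hypothesis $\{1\}$ and $G$ are themselves consecutive members of this chain, giving the conclusion for $G = G/\{1\}$.

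The main obstacle is the Conradian half. Without right-invariance, the set $C(g)$ used in the bi-ordering argument need not be closed under multiplication, since products introduce conjugates; the Conrad condition is precisely what is needed to keep those conjugates under control, and thereby to produce the requisite chain of convex subgroups with Archimedean consecutive quotients. Establishing this structure theorem is the technical heart of \cite{PC59}; once it is granted, the ``no convex subgroups'' hypothesis immediately specializes the chain to $\{1\} \subsetneq G$ and yields the desired embedding.
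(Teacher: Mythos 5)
The paper does not prove this statement at all; it is quoted from Conrad's work \cite{PC59} purely as background, so there is no internal proof to compare against. Your argument is correct and is the standard one: in the bi-ordered case the set $C(g)$ is indeed a nontrivial convex subgroup (your product estimate $h_1h_2 < g^{n_1}h_2 < g^{n_1+n_2}$ uses right- and left-invariance exactly where needed, and $g \in C(g)$ rules out triviality), so the hypothesis forces the Archimedean property and H\"older's theorem applies; in the Conradian case the hypothesis makes $\{1\} \subsetneq G$ a convex jump, and Conrad's theorem that the quotient across any jump of a Conradian ordering carries an Archimedean order finishes the argument. You are right that the Conradian half is where the real content lies, and that it rests entirely on the imported structure theorem rather than on anything proved here.
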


In the case that $G$ admits a non-Conradian left ordering having no proper, nontrivial convex subgroups, it is not likely that the structure of $G$ must be so restricted.  While we will see that free groups admit such left orderings, there are also non-free, non-Abelian groups that admit such left orderings as well (\cite{MR77} Example 7.2.3).  It has also recently been shown that the braid groups themselves admit many left-orderings with no convex subgroups, see \cite{NW09}.

\section{A left ordering of $F_2$ having no convex subgroups.}
As a warm up for the general case, which will be slightly more involved, we deal first with the free group on two generators.

We begin by defining the Dehornoy left ordering of the braid groups (also known as the `standard' ordering), whose positive cone we shall denote $P_D$ \cite{DDRW08}, \cite{PD94}. Recall that for each integer $n \ge 2$, the Artin braid group $B_n$ is the group generated by 
$\s_1 , \s_2 , \dots , \s_{n-1}$, subject to the relations 
$$\s_i\s_j = \s_j\s_i {\rm \: if \:} |i-j| >1,\quad \s_i\s_j\s_i=\s_j\s_i\s_j {\rm \: if \: } |i-j| =1.$$  

\begin{definition}Let $w$ be a word in the generators $\s_i, \cdots , \s_{n-1}$.  Then $w$ is said to be: $i$-positive if the generator $\s_i$ occurs in $W$ with only positive exponents, $i$-negative if $\s_i$ occurs with only negative exponents, and $i$-neutral if $\s_i$ does not occur in $w$.
\end{definition}

We then define the positive cone of the Dehornoy ordering as

\begin{definition} 
The positive cone $P_D \subset B_n$ of the Dehornoy ordering is the set
\[P_D = \{ \beta \in B_n : \mbox{ $\beta$ is $i$-positive for some $i \leq n-1$}\}.\]
\end{definition}

Let $\beta \in B_n$ be any braid. An extremely important property of this ordering is that the conjugate $\beta \s_k \beta^{-1}$ is always $i$-positive for some $i$, for every generator $\s_k$ in $B_n$.  This property is referred to as the subword property \cite{DDRW08}.

Recall that the commutator subgroup $[B_3, B_3]$ is isomorphic to the free group $F_2$ on two generators.  The commutator subgroup is generated by the braids  $\beta_1 = \s_2 \s_1^{-1}$ and $\beta_2=\s_1 \s_2 \s_1^{-2}$ \cite{MR06}.  Of course we can instead take as generators the $1$-positive braids $\beta_1^{-1} = \s_1 \s_2^{-1}$ and $\beta_2^{-1} \beta_1^{-1} = \s_1^2 \s_2^{-2}$.  

Define a positive cone $P \subset F_2$ by $P = [B_3, B_3] \cap P_D$, with associated ordering $<$ of $F_2$.  Thus, the ordering $<$ of $F_2$ is the restriction of the Dehornoy ordering $<_D$ of $B_3$ to the (free) commutator subgroup $[B_3, B_3]$. 

\begin{theorem}The ordering $<$ of $F_2$ has no proper, non-trivial convex subgroups.
\label{th:1}
\end{theorem}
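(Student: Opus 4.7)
The plan is by contradiction: assume $H$ is a proper, nontrivial convex subgroup of $(F_2, <)$ and pick $h \in H$ with $h > 1$. I aim to derive $H = F_2$, a contradiction. Since the abelianization $B_3 \to \Z$ sends each $\s_i$ to $1$, every element of $F_2 = [B_3,B_3]$ has zero total exponent sum. Any nontrivial $2$-positive braid lies in $\langle \s_2 \rangle$ with strictly positive exponent sum, hence cannot lie in $F_2$. Consequently every positive element of $(F_2,<)$ is $1$-positive as a braid, and in particular $h$ has a representative
$h = \s_2^{k_0}\s_1\s_2^{k_1}\s_1\cdots\s_1\s_2^{k_m}$ with $m \ge 1$.

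The main step is to exhibit two elements $g_1, g_2 \in F_2$ with $1 < g_1, g_2 < h$ such that $\langle g_1, g_2 \rangle = F_2$; by convexity of $H$, the interval $(1,h)$ is contained in $H$, so $g_1, g_2 \in H$ and then $H \supseteq \langle g_1, g_2\rangle = F_2$, the desired contradiction. To build these elements I would start from the chosen generating pair $\beta_1^{-1} = \s_1\s_2^{-1}$, $\beta_2^{-1}\beta_1^{-1} = \s_1^2\s_2^{-2}$, and produce from them conjugates (or products with $h^{-1}$) by elements of $F_2$ chosen as a function of the tuple $(k_0, \dots, k_m)$; using the braid relations together with the subword property one rewrites them in $1$-positive form with strictly ``less content'' (say, fewer occurrences of $\s_1$ than $h$ after reduction), so that they are smaller than $h$ in the Dehornoy ordering. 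Independence of the resulting pair, i.e.\ that they freely generate $F_2$, is best verified by examining their images in the abelianization $F_2 \to \Z^2$ (where the standard generators map to a basis) together with a Nielsen-reduction check.

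The main obstacle is carrying out this construction uniformly in $h$. A direct computation shows that $(\s_1\s_2^{-1})^n < \s_1^2\s_2^{-2}$ for every $n \ge 1$ (the negative form of $b^{-1}a^n$ reduces, via the relation $\s_1^{-1}\s_2^{-1}\s_1 = \s_2\s_1^{-1}\s_2^{-1}$, to a $1$-negative word), so the restricted Dehornoy ordering on $F_2$ is not cofinal; there is no single ``smallest positive element'' whose powers exceed everything, and the pair $(g_1,g_2)$ must genuinely depend on $h$. The delicate part is verifying the inequalities $1 < g_i < h$ and the generating property simultaneously, and this requires invoking the subword property to control the sign of conjugates of $\s_1$ and $\s_2$ appearing when one rewrites the candidate $g_i$ in $1$-positive form. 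This combinatorial bookkeeping with the braid relations in $B_3$ is the technical heart of the argument.
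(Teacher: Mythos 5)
Your high-level shape is right---show that a convex subgroup containing a positive element must contain a generating set of $F_2$---but the proposal defers exactly the step that constitutes the proof, and the plan you state for that step has genuine problems. You reduce everything to the claim that the interval $(1,h)$ contains a pair generating all of $F_2$, and this claim is nowhere established in your sketch; it is also stronger than what the paper's argument actually delivers, since the paper never finds generators inside $(1,h)$. Instead it uses the subword property to get $1<\beta\s_2^k\beta^{-1}\s_2^{-k}<\beta$ (the left inequality because $\beta\s_2^k\beta^{-1}$ is $1$-positive, the right because $\s_2^k\beta^{-1}\s_2^{-k}$ is $1$-negative), concludes by convexity that this commutator lies in $C$, and then---crucially using that $C$ is a \emph{subgroup}, not merely a convex set---multiplies by $\beta^{-1}$ to place the conjugate $\s_2^k\beta\s_2^{-k}$ in $C$. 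That conjugate need not lie in $(1,\beta)$ at all; for $k$ large it takes the form $\s_2^{u'}\s_1 w\s_2^{-k}$ with $u'>0$, and a short braid-relation computation shows both \emph{fixed} generators $\s_1\s_2^{-1}$ and $\s_1^2\s_2^{-2}$ are positive and less than it. Your plan of finding generators below $h$ itself cannot work uniformly: $h$ may be smaller than both standard generators, and (as your own observation $(\s_1\s_2^{-1})^n<\s_1^2\s_2^{-2}$ illustrates) no power of a general positive $h$ need dominate them, so some mechanism for enlarging what you know lies in $H$ beyond the intervals $(1,h^n)$ is indispensable. That mechanism---the commutator trick followed by conjugation by $\s_2^k$---is the missing idea.

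Two further points. Even granting candidate elements $g_1,g_2$ obtained as conjugates of the standard generators, ``images in the abelianization plus a Nielsen-reduction check'' does not certify that they generate $F_2$: a pair of conjugates $w_1xw_1^{-1}, w_2yw_2^{-1}$ of a basis typically generates a proper subgroup of infinite index even though its image is a basis of $\Z^2$, so surjecting onto the abelianization is far from sufficient in a free group. And you do not address the degenerate case where $h$ commutes with $\s_2$, i.e.\ $h=\Delta^{2p}\s_2^{-6p}$; the paper handles this separately by showing such an $h$ is cofinal in the Dehornoy ordering, so that suitable powers of $h$ dominate both generators. As written, the proposal is a plausible outline whose technical heart---which you yourself flag as outstanding---is absent, and the outline points in a direction that would need correction before it could be completed.
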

\begin{proof}
Let $C \subset F_2 = [B_3, B_3]$ be a nontrivial convex subgroup.  Then we may choose $1 < \beta \in F_2$ that is $1$-positive (no nontrivial $1$-neutral braids lie in $[B_3, B_3]$, because they do not have zero total exponent).  

There are now two cases to consider.
  
\textbf{Case 1.}
 Suppose that $\beta$ commutes with $\s_2$.  Then $\beta = \Delta_3^{2p} \s_2^q$ for some integers $p, q$ (\cite{FRZ96}, here $\Delta = \s_1 \s_2 \s_1$).  Since $\beta \in [B_3, B_3]$, we know that $q = -6p$, since $\beta$ must have zero total exponent, and $p >0$ because we have chosen $\beta$ to be $1$-positive. Then we have that $\Delta_3^2 < \Delta_3^{4p} \s_2^{-12p} = \beta^2$, so that $\beta$ is cofinal in the Dehornoy ordering \cite{DDRW08}.  Therefore, there exist integers $k, l$ so that in $F_2$ we have
\[ 1< \s_1 \s_2^{-1} < \beta^k, \mbox{ and } 1 < \s_1^2 \s_2^{-2} < \beta^l,\]
and thus $\s_1 \s_2^{-1}, \s_1^2 \s_2^{-2} \in C$ by convexity.  Therefore we must have $C = F_2$, as $C$ contains both generators of $F_2$.

\textbf{Case 2.} Suppose that $\beta$ and $\s_2$ do not commute.  Let $k>0$, and observe that $\beta \s_2^k \beta^{-1}$ is a $1$-positive braid by the subword property, so that the commutator $\beta \s_2^k \beta^{-1} \s_2^{-k}$ is also $1$-positive.   Next, because $\beta$ is $1$-positive, the braid $\s_2^k \beta^{-1} \s_2^{-k}$ is $1$-negative, so that $\s_2^k \beta^{-1} \s_2^{-k} < 1$, and thus $ \beta \s_2^k \beta^{-1} \s_2^{-k} < \beta$.  Thus, we have shown that $1< \beta \s_2^k \beta^{-1} \s_2^{-k} < \beta$, so that $\beta \s_2^k \beta^{-1} \s_2^{-k}$ must lie in the subgroup $C$, by convexity.

Now both the braids $\beta$ and $\beta \s_2^k \beta^{-1} \s_2^{-k}$ lie in the convex subgroup $C$, so the braid $ \s_2^k \beta^{-1} \s_2^{-k}$ (and hence its inverse  $ \s_2^k \beta \s_2^{-k}$) must also lie in $C$, for any choice of positive integer $k$. 

We now refine our choice of braid $\beta \in C$.  Suppose that $\beta$ is represented by the $1$-positive braid word $\s_2^u \s_1 w$, where $u$ is any integer, and $w$ is a $1$-positive, $1$-neutral or empty word.  Choose $k>0$ so that $u' = k + u >0$, and set $\beta' = \s_2^k \beta \s_2^{-k}$, so that $\beta'$ is represented by the $1$-positive braid word $\s_2^{u'} \s_1 w \s_2^{-k}$.  Note that $\beta' \in C$, from our work above.

We will now show that $C$ must contain both generators of $F_2$. Observe that the braid represented by the word $\s_2 \s_1^{-1} \s_2^{u'} \s_1 w \s_2^{-k}$ is $1$-positive, as  $\s_2 (\s_1^{-1} \s_2^{u'} \s_1) w \s_2^{-k} = \s_2 (\s_2 \s_1^{u'} \s_2^{-1}) w \s_2^{-k}$, and $u'>0$.  Therefore we have
\[ 1< \s_2 \s_1^{-1} \s_2^{u'} \s_1 w \s_2^{-k} \Rightarrow \s_1 \s_2^{-1} <  \s_2^{u'} \s_1 w \s_2^{-k} = \beta' \in C, \]
and since $1< \s_1 \s_2^{-1}$, this implies that $\s_1 \s_2^{-1} \in C$ by convexity.

Considering the second generator $\s_1^2 \s_2^{-2}$, observe that the braid represented by the word $\s_2^2 \s_1^{-2} \s_2^{u'} \s_1 w \s_2^{-k}$ is $1$-positive, as we compute
\[\s_2^2 \s_1^{-1} (\s_1^{-1} \s_2^{u'} \s_1) w \s_2^{-k} = \s_2^2 \s_1^{-1} (\s_2 \s_1^{u'} \s_2^{-1}) w \s_2^{-k}, \]
and
\[\s_2^2 (\s_1^{-1} \s_2 \s_1) \s_1^{u'-1} \s_2^{-1} w \s_2^{-k} = \s_2^2 (\s_2 \s_1 \s_2^{-1}) \s_1^{u'-1} \s_2^{-1} w \s_2^{-k},
\]
where $u' >0$.
Therefore we have 
\[1< \s_2^2 \s_1^{-2} \s_2^{u'} \s_1 w \s_2^{-k} \Rightarrow \s_1^2 \s_2^{-2} < \s_2^{u'} \s_1 w \s_2^{-k} = \beta' \in C,\]
and since $1< \s_1^2 \s_2^{-2}$, we conclude from convexity of $C$ that $\s_1^2 \s_2^{-2} \in C$.

Thus, $C$ contains both generators of $F_2$, so that $C = F_2$.

\end{proof}

\section{Left ordering the free groups of rank greater than two}

We now extend our results to cover those free groups $F_k$ with $k>2$.   Let $x = \s_1 \s_2^{-1}$ and $y = \s_1^2 \s_2^{-2}$ denote the generators of $F_2$, and we let $K_{n}$ denote the kernel of the map $F_2 \rightarrow \mathbb{Z}_{n-1}$ defined by $y \mapsto 0$, $x \mapsto 1$.  Here $\mathbb{Z}_{n-1}$ is the cyclic group of order $n-1$.   We will employ a proof very similar to that of Theorem \ref{th:1}, by considering $K_n \subset F_2 = [B_3, B_3]$, and showing that the restriction of the Dehornoy ordering to $K_n$ has no convex subgroups.  First we need to find a generating set for $K_n$.

\begin{lemma}
\label{lem:gen}
The subgroup $K_{n}$ is free of rank $n$, with basis \[y, x^{n-1}, xyx^{n-2}, x^2yx^{n-3}, \cdots , x^{n-2}yx.\]
\end{lemma}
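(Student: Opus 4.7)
The plan is to apply the Reidemeister--Schreier method. First, observe that since $F_2/K_n \cong \mathbb{Z}_{n-1}$ has order $n-1$, the subgroup $K_n$ has index $n-1$ in $F_2$. By the Nielsen--Schreier theorem, $K_n$ is therefore free of rank $1+(n-1)(2-1)=n$, so the claim on the rank is automatic; what remains is to exhibit a free basis of the stated form.

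To do this, I would use the Schreier transversal $T=\{1,x,x^2,\dots,x^{n-2}\}$, which is prefix-closed since for $0\le i\le n-2$ the element $x^i$ is the chosen representative of its coset. The Reidemeister--Schreier generators of $K_n$ then have the form $t\cdot s\cdot\overline{ts}^{-1}$ for $t\in T$ and $s\in\{x,y\}$, where $\overline{w}\in T$ denotes the coset representative of $w$. Because $y\in K_n$, the representative $\overline{x^iy}$ equals $x^i$, so the choice $s=y$ produces the generators $x^i y x^{-i}$ for $0\le i\le n-2$. For $s=x$ and $0\le i\le n-3$ we have $\overline{x^{i+1}}=x^{i+1}$, and the resulting generator is trivial; whereas for $i=n-2$ we have $\overline{x^{n-1}}=1$, yielding the single additional generator $x^{n-1}$. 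This gives the Schreier basis
\[
y,\ xyx^{-1},\ x^2yx^{-2},\ \dots,\ x^{n-2}yx^{-(n-2)},\ x^{n-1}.
\]

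Finally, I would apply the Nielsen transformation that replaces each $x^i y x^{-i}$ (for $1\le i\le n-2$) by the product $(x^i y x^{-i})\cdot x^{n-1}=x^i y x^{\,n-1-i}$. This is an elementary basis change in the free group $K_n$, and it transforms the Schreier basis into the asserted basis $y,\ x^{n-1},\ xyx^{n-2},\ x^2yx^{n-3},\ \dots,\ x^{n-2}yx$. I do not anticipate any serious obstacle: the rank is forced by Nielsen--Schreier, and producing the exact basis in the statement amounts to a direct Reidemeister--Schreier computation capped off by an elementary Nielsen move.
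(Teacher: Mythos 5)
Your proposal is correct and follows essentially the same route as the paper: both use the Schreier transversal $1, x, \dots, x^{n-2}$ to produce the generators $y, xyx^{-1}, \dots, x^{n-2}yx^{-(n-2)}, x^{n-1}$, identify the rank as $n$ via the Nielsen--Schreier index formula, and then right-multiply the conjugates by $x^{n-1}$ to obtain the stated basis. No substantive differences.
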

\begin{proof}From Lemma 7.56 of \cite{JR94}, we know that $K_{n}$ is finitely generated.  Moreover, we may compute a generating set of $K_n$ as follows:  Consider the generating set $g_1 = x, g_2 = x^{-1}, g_3 = y, g_4= y^{-1}$ of $F_2$, and let $1, x, x^2, \cdots ,x^{n-2}$ be representatives of the right cosets of $K_n \subset F_2$.  For all $i, j$, there exists $h_{ij}$ and some coset representative $x^{k(i,j)}$ such that we may write $x^i g_j = h_{ij} x^{k(i,j)}$.  The elements $h_{ij}$ form a generating set for $K_n$.

In our present setting, we find for $i < n-2$
\[ x^i g_1 = x^{i} \cdot x = 1 \cdot x^{i+1},
\]
so that $h(i, 1) = 1$, and for $i = n-2$ we get $h(i, 1) = x^{n-1}$.  Similarly, we compute for $i \geq 1$ that
\[ x^i g_2  = x^{i} \cdot x^{-1} = 1 \cdot x^{i-1},
\]
so that $h(i, 2) =1$, and for $i=0$ we compute $h(i, 2) = x^{-(n-1)}$.

Next, for all $i$ we compute
\[ x^i y^{\pm 1} = x^i y^{\pm 1} x^{-i} \cdot x^i,
\]
so that $h(i, 3) = h(i, 4)^{-1} = x^i y x^{-i}$.
Eliminating inverses from this generating set yields the set 
\[y, x^{n-1}, xyx^{-1}, x^2yx^{-2}, \cdots , x^{n-2}yx^{-(n-2)}.\]
From Proposition 3.9 of \cite{LS77} we deduce that $K_n$ is of rank $n$, and therefore the generating set above must provide a basis for $K_n$.  Right multiplying those generators of the form $x^i y x^{-i}$ by the generator $x^{n-1}$ yields the desired generating set.
\end{proof}

Also important in the proof of Theorem \ref{th:1} was the action of conjugation by $\s_2$.  In order to generalize our theorem, we must make the following analysis.

Let $F_2$ be the free group on two generators $x$ and $y$, and define an automorphism $\phi: F_2 \rightarrow F_2$ according to the forumlas $\phi(x) = xy^{-1}x$, and $\phi(y) = xy^{-1}x^{2}$.  Then the following holds.

\begin{lemma}
Consider $F_2$ as the commutator subgroup $[B_3, B_3]$ with generators  $x=\s_1 \s_2^{-1}$ and $y=\s_1^2 \s_2^{-2}$. Then the automorphism $\phi$ of $F_2$ corresponds to conjugation of $[B_3, B_3]$ by the generator $\s_2 \in B_3$, so that $\phi(g) = \s_2^{-1} g \s_2$ for all $g \in F_2$.
\label{lem:conform}
\end{lemma}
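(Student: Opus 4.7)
The plan is straightforward: since $F_2$ is free on $x$ and $y$, any homomorphism out of $F_2$ is determined by its values on these two generators. Conjugation by $\s_2$ is an automorphism of $B_3$ that carries the normal subgroup $[B_3, B_3]$ to itself, hence restricts to an automorphism of $F_2 = [B_3, B_3]$. To identify this restricted automorphism with $\phi$, it is therefore enough to verify the two equalities
\[
\s_2^{-1} x \s_2 = xy^{-1}x \quad \text{and} \quad \s_2^{-1} y \s_2 = xy^{-1}x^2
\]
inside $B_3$.

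The only nontrivial ingredient is the braid relation $\s_1 \s_2 \s_1 = \s_2 \s_1 \s_2$, which rearranges to the useful identity $\s_2^{-1}\s_1\s_2 = \s_1 \s_2 \s_1^{-1}$, or equivalently $\s_2^{-1}\s_1 = \s_1 \s_2 \s_1^{-1}\s_2^{-1}$. For the first equality, I would compute the left-hand side directly, $\s_2^{-1} x \s_2 = \s_2^{-1}\s_1 \s_2^{-1}\s_2 = \s_2^{-1}\s_1$, and then expand the right-hand side using $x = \s_1 \s_2^{-1}$ and $y = \s_1^2 \s_2^{-2}$, namely $xy^{-1}x = (\s_1\s_2^{-1})(\s_2^2 \s_1^{-2})(\s_1 \s_2^{-1})$, collapse the internal $\s_2$ powers and the adjacent $\s_1^{-2}\cdot \s_1$, and apply the rearranged braid relation once to land on $\s_2^{-1}\s_1$.

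For the second equality, I would use the same recipe: $\s_2^{-1} y \s_2 = \s_2^{-1} \s_1^2 \s_2^{-2}\s_2 = \s_2^{-1}\s_1^2 \s_2^{-1}$, and then write $xy^{-1}x^2 = (xy^{-1}x)\cdot x$, substitute the value $\s_2^{-1}\s_1$ for $xy^{-1}x$ from the previous step, and multiply by $x = \s_1 \s_2^{-1}$ to obtain $\s_2^{-1}\s_1^2 \s_2^{-1}$. This matches the left-hand side exactly.

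The main potential pitfall is bookkeeping with the braid relation when simplifying products like $\s_1^{-1} \s_2^{k} \s_1$, but by reducing everything to the single application of $\s_2^{-1}\s_1\s_2 = \s_1 \s_2 \s_1^{-1}$ at the key spot, the calculation is just cancellation. Since the two required identities are verified by direct manipulation and the underlying reason ($[B_3,B_3]$ is normal, $F_2$ is free on $x,y$) already guarantees that the resulting map is an automorphism, no further argument is needed.
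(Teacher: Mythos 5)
Your proposal is correct and follows essentially the same route as the paper: a direct computation of $\phi(x)$ and $\phi(y)$ in terms of $\s_1,\s_2$, reduced via one application of the braid relation $\s_2^{-1}\s_1\s_2 = \s_1\s_2\s_1^{-1}$ to match $\s_2^{-1}x\s_2$ and $\s_2^{-1}y\s_2$. Your shortcut of writing $xy^{-1}x^2 = (xy^{-1}x)\cdot x$ to recycle the first computation is a minor streamlining of the paper's second calculation, and your explicit remarks that $[B_3,B_3]$ is normal and that a homomorphism of a free group is determined on generators are correct points the paper leaves implicit.
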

\begin{proof}
The proof is computational.  First conjugating the generator $x$, we compute
\begin{eqnarray*}
\phi(x) & = & xy^{-1}x \nonumber \\
 & = & \s_1 \s_2^{-1} \s_2^2 \s_1^{-2} \s_1 \s_2^{-1} \nonumber \\
 & = &  (\s_1 \s_2 \s_1^{-1}) \s_2^{-1} \nonumber \\
 & = &   (\s_2^{-1} \s_1 \s_2) \s_2^{-1} \nonumber \\
& = &   \s_2^{-1} \s_1 \s_2^{-1} \s_2  \nonumber \\
& = &  \s_2^{-1} x \s_2  \nonumber \\
\end{eqnarray*}
and
\begin{eqnarray*}
\phi(y) & = & xy^{-1}x^2 \nonumber \\
 & = &  \s_1 \s_2^{-1} \s_2^2 \s_1^{-2} \s_1 \s_2^{-1} \s_1 \s_2^{-1} \nonumber \\
 & = &  (\s_1 \s_2 \s_1^{-1}) \s_2^{-1} \s_1 \s_2^{-1} \nonumber \\
 & = &  (\s_2^{-1} \s_1 \s_2) \s_2^{-1} \s_1 \s_2^{-1}  \nonumber \\
& = &  \s_2^{-1} \s_1^2 \s_2^{-1}  \nonumber \\
& = &  \s_2^{-1} \s_1^2 \s_2^{-2} \s_2  \nonumber \\
& = &  \s_2^{-1} y \s_2.  \nonumber \\
\end{eqnarray*}
\end{proof}

This computation allows us to show that $K_n$ is fixed by the conjugation action of $\s_2^6$ or $\s_2^{-6}$ on the commutator subgroup $[B_3, B_3]$.

\begin{lemma}
\label{lem:fix}
Let $\phi : F_2 \rightarrow F_2$ be the map arising from conjugation of $[B_3, B_3]$ by $\s_2$, namely $\phi(x) = xy^{-1}x,$ and $\phi(y) = xy^{-1}x^2$.  Then for all $n$, $\phi^6(K_n) = K_n$.
\end{lemma}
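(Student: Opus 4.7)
The plan is to pass to the abelianization $F_2^{\mathrm{ab}} \cong \Z^2$ and reduce the statement to a $2 \times 2$ integer matrix calculation. The point is that $K_n$ depends only on abelianization data: it is the preimage under the abelianization map $F_2 \to \Z^2$ of $(n-1)\Z \oplus \Z$, since the defining map $F_2 \to \Z_{n-1}$ simply reads off the $x$-exponent sum modulo $n-1$. Consequently, any automorphism of $F_2$ that acts trivially on $F_2^{\mathrm{ab}}$ preserves $K_n$ setwise, so it is enough to show that $\phi^6$ acts as the identity on $F_2^{\mathrm{ab}}$.

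The first step is to read off the integer matrix $M$ that $\phi$ induces on $F_2^{\mathrm{ab}}$ with basis $\{x,y\}$. From the total exponent sums of $\phi(x)=xy^{-1}x$ and $\phi(y)=xy^{-1}x^2$, the columns of $M$ are $(2,-1)^{T}$ and $(3,-1)^{T}$, giving
\[
M = \begin{pmatrix} 2 & 3 \\ -1 & -1 \end{pmatrix}.
\]
Its characteristic polynomial is $\lambda^2 - \lambda + 1$, whose roots are primitive sixth roots of unity, so one expects $M$ to have order $6$ in $SL_2(\Z)$. A quick direct calculation confirms this: $M^2 = \bigl(\begin{smallmatrix} 1 & 3 \\ -1 & -2 \end{smallmatrix}\bigr)$, and then $M^3 = -I$, whence $M^6 = I$.

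Since $M^6 = I$, the automorphism $\phi^6$ acts trivially on $F_2^{\mathrm{ab}}$, meaning $\phi^6(g)$ has the same $x$-exponent sum as $g$ for every $g \in F_2$. In particular $g$ and $\phi^6(g)$ have the same image in $\Z_{n-1}$, so $g \in K_n$ if and only if $\phi^6(g) \in K_n$. This yields $\phi^6(K_n) = K_n$ for every $n$, as claimed. There is no real obstacle here; the one place where care is needed is the translation of $\phi$ into $M$, for which $\det M = 1$ and $\mathrm{tr}\, M = 1$ provide useful consistency checks against the predicted order $6$.
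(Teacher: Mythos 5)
Your proof is correct and follows essentially the same route as the paper: both pass to the abelianization, identify $\phi_*$ with the matrix $\bigl(\begin{smallmatrix} 2 & 3 \\ -1 & -1 \end{smallmatrix}\bigr)$, and observe that its sixth power is the identity, which forces $\phi^6$ to preserve any subgroup (such as $K_n$) determined by abelianization data. Your version just spells out the matrix powers and the identification of $K_n$ as a preimage in slightly more detail than the paper does.
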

\begin{proof}
Consider the abelianization $F_2 \stackrel{ab}{\rightarrow} \mathbb{Z} \oplus \mathbb{Z}$. We find that $\phi$ descends to a map $\phi_*: \mathbb{Z}\oplus \mathbb{Z} \rightarrow  \mathbb{Z}\oplus \mathbb{Z}$, and that relative to the basis $\{ ab(x), ab(y) \}$ the map $\phi_*$ is represented by the matrix
\[ \left( \begin{array}{cc}
 2 & 3 \\
 -1 & -1  \end{array} \right).\] 
The sixth power of this matrix is the identity.  It follows that for any normal subgroup $K$ such that $F_2/K$ is abelian, we have $\phi^6(K) = K$. 
\end{proof}

Lastly, note that any generator of $K_n$, when we substitute $x=\s_1 \s_2^{-1}$ and $y=\s_1^2 \s_2^{-2}$, yields a product of braid group generators of the form $\s_1^{l_1} \s_2^{k_1} \s_1^{l_2} \cdots \s_2^{k_{m-1}} \s_1^{l_m} \s_2^{k_m}$, where $k_i<0$ and $l_i >0$ for all $i$.  Therefore, we require the following lemma in order to compare the generators to different braids in $K_n$.

\begin{lemma}
\label{lem:pos}
Any braid represented by a word of the form 
\[ \s_2^{k_1} \s_1^{l_1} \cdots \s_2^{k_m} \s_1^{l_m} \s_2^n \s_1,  \]
where $k_i > 0 $, $l_i <0$ for all $i$, and $n > 1$, is $1$-positive. 
\end{lemma}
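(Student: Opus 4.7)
The plan is to leverage the braid relation $\s_1\s_2\s_1 = \s_2\s_1\s_2$, in its rewritten form $\s_1^{-1}\s_2\s_1 = \s_2\s_1\s_2^{-1}$, in the same spirit as the calculations in Case 2 of Theorem \ref{th:1}. First I would establish, by a short induction on $a$, the closed-form conjugation identity
\[
\s_1^{-a}\,\s_2^n\,\s_1 \;=\; \s_2\,\s_1\,\s_2^{-(a-1)}\,\s_1^{n-1}\,\s_2^{-1} \qquad (a,n\ge 1).
\]
The base case $a=1$ reduces to $\s_1^{-1}\s_2^n\s_1 = (\s_1^{-1}\s_2\s_1)^n = (\s_2\s_1\s_2^{-1})^n = \s_2\s_1^n\s_2^{-1}$, which matches the right-hand side, and the inductive step peels one further $\s_1^{-1}$ off the left and applies $\s_1^{-1}\s_2\s_1 = \s_2\s_1\s_2^{-1}$ once more.

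With the identity in hand, I would prove the lemma by induction on $m$. The base $m=0$ is immediate, since $\s_2^n\s_1$ is already $1$-positive. For the inductive step, apply the identity with $a = |l_m|$ to the rightmost segment $\s_1^{l_m}\s_2^n\s_1$ and absorb the newly produced leading $\s_2$ into the preceding $\s_2^{k_m}$, obtaining
\[
\s_2^{k_1}\s_1^{l_1}\cdots\s_2^{k_m}\s_1^{l_m}\s_2^n\s_1 \;=\; \bigl(\s_2^{k_1}\s_1^{l_1}\cdots\s_2^{k_{m-1}}\s_1^{l_{m-1}}\s_2^{k_m+1}\s_1\bigr)\cdot \s_2^{-(|l_m|-1)}\,\s_1^{n-1}\,\s_2^{-1}.
\]
The parenthesized prefix is again a word of the form covered by the lemma, now with $m-1$ inner blocks and trailing $\s_2$-exponent $k_m+1\ge 2$, so the inductive hypothesis produces a $1$-positive representative of that braid. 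The trailing factor $\s_2^{-(|l_m|-1)}\s_1^{n-1}\s_2^{-1}$ contains $\s_1$ only through the positive power $\s_1^{n-1}$ (positive thanks to $n>1$). Concatenating these yields a $1$-positive word representing the original braid.

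The real obstacle is spotting the closed-form identity above; once it is in place, the rest is bookkeeping. Each numerical hypothesis plays its role precisely: $|l_i|\ge 1$ is what lets the identity apply at each iteration, $k_i\ge 1$ ensures that the absorbed exponent $k_m+1$ is at least $2$ and so keeps the prefix within the hypotheses of the lemma for the next step, and $n>1$ is exactly what guarantees that the tail $\s_1^{n-1}$ contributes a genuinely positive power of $\s_1$ rather than being empty.
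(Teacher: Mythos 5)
Your proposal is correct and follows essentially the same route as the paper: an induction on $m$ that peels off the rightmost block $\s_1^{l_m}\s_2^n\s_1$, rewrites it via the braid relation as $\s_2\s_1\s_2^{l_m+1}\s_1^{n-1}\s_2^{-1}$, absorbs the leading $\s_2$ into $\s_2^{k_m}$, and appeals to the inductive hypothesis on the resulting shorter prefix while the tail contributes only the positive power $\s_1^{n-1}$. Your closed-form conjugation identity is just a repackaging (proved by iterating $\s_1^{-1}\s_2\s_1=\s_2\s_1\s_2^{-1}$) of the two identities $\s_1^{k}\s_2\s_1=\s_2\s_1\s_2^{k}$ and $\s_1^{-1}\s_2^{k}\s_1=\s_2\s_1^{k}\s_2^{-1}$ that the paper applies in a single chain of equalities.
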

\begin{proof}We use induction on $m$, the length of the product.  For $m=0$, the claim is trivial.  
Assuming the claim holds for those products of length $m-1$, we use the identities $\s_1^k \s_2 \s_1 = \s_2 \s_1 \s_2^k$ and $\s_1^{-1} \s_2^k \s_1 = \s_2 \s_1^k \s_2^{-1}$, and compute that
\begin{eqnarray*}
 \s_2^{k_1} \s_1^{l_2} \cdots \s_2^{k_m} \s_1^{l_m} \s_2^n \s_1 & = & \s_2^{k_1} \s_1^{l_2} \cdots \s_2^{k_m} \s_1^{l_m+1}(\s_1^{-1} \s_2^n \s_1) \nonumber \\
 & = &    \s_2^{k_1} \s_1^{l_2} \cdots \s_2^{k_m} \s_1^{l_m+1}(\s_2 \s_1^n \s_2^{-1}) \nonumber \\
 & = &  \s_2^{k_1} \s_1^{l_2} \cdots \s_2^{k_m} (\s_1^{l_m+1}\s_2 \s_1)\s_1^{n-1} \s_2^{-1} \nonumber \\
 & = &  \s_2^{k_1} \s_1^{l_2} \cdots \s_2^{k_m} (\s_2 \s_1 \s_2^{l_m+1})\s_1^{n-1} \s_2^{-1}  \nonumber \\
& = &   \s_2^{k_1} \s_1^{l_2} \cdots \s_2^{k_m+1} \s_1 (\s_2^{l_m+1}\s_1^{n-1} \s_2^{-1}).\\
\end{eqnarray*}
The bracketed expression $\s_2^{l_m+1}\s_1^{n-1} \s_2^{-1}$ is $1$-positive as $n>1$, and the remaining terms in the product above are representative of a $1$-positive braid, by assumption.   By induction, the claim is proven.
\end{proof}

\begin{theorem}
Let $n>2$. Then the restriction of the Dehornoy ordering to the subgroup $K_n \subset F_2 = [B_3, B_3]$ has no proper, nontrivial convex subgroups.
\end{theorem}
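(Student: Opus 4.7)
The plan is to mirror the proof of Theorem~\ref{th:1}, now using Lemma~\ref{lem:gen} to enumerate a basis for $K_n$, Lemma~\ref{lem:fix} to ensure that conjugation by certain powers of $\s_2$ preserves $K_n$, and Lemma~\ref{lem:pos} as the main $1$-positivity criterion. Let $C \subset K_n$ be a nontrivial convex subgroup and choose $1 < \beta \in C$. Since $K_n \subset [B_3,B_3]$ and a nontrivial $1$-neutral braid in $B_3$ is a nonzero power of $\s_2$, hence has nonzero total exponent, $\beta$ must in fact be $1$-positive. I would then split into the two cases of Theorem~\ref{th:1}, according to whether $\beta$ commutes with $\s_2$.

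The commuting case is immediate: as in Case~1 of Theorem~\ref{th:1}, one has $\beta = \Delta_3^{2p}\s_2^{-6p}$ for some $p > 0$, so $\beta$ is cofinal in the Dehornoy ordering. Each of the $n$ basis elements of $K_n$ from Lemma~\ref{lem:gen} is $1$-positive, hence lies between $1$ and some power of $\beta$, and convexity of $C$ forces it into $C$. Thus $C = K_n$.

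The noncommuting case is the main content. I would write $\beta$ as a $1$-positive word of the form $\s_2^u \s_1 w$ with $w$ a $1$-positive, $1$-neutral, or empty word. The key adjustment from Theorem~\ref{th:1} is that conjugation by a single $\s_2$ need not preserve $K_n$; by Lemma~\ref{lem:fix}, however, conjugation by $\s_2^{6k}$ does. So I would choose $k$ large enough that $u' := u + 6k > 1$ and set $\beta' = \s_2^{6k}\beta\s_2^{-6k} = \s_2^{u'}\s_1 w \s_2^{-6k}$. Running the commutator argument of Case~2 of Theorem~\ref{th:1} with $\s_2^{6k}$ in place of $\s_2^k$ throughout shows $\beta' \in C$. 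It then remains to verify, for each basis element $g$ of $K_n$ from Lemma~\ref{lem:gen}, that $g^{-1}\beta'$ is $1$-positive, from which $1 < g < \beta'$ and $g \in C$ by convexity.

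To do this, I would substitute $x = \s_1\s_2^{-1}$ and $y = \s_1^2\s_2^{-2}$ in each basis element: this expresses every $g^{-1}$ as an alternating word $\s_2^{a_1}\s_1^{-b_1}\cdots \s_2^{a_r}\s_1^{-b_r}$ with every $a_i, b_i > 0$, so that $g^{-1}\beta'$ is represented by $\s_2^{a_1}\s_1^{-b_1}\cdots \s_2^{a_r}\s_1^{-b_r}\s_2^{u'}\s_1\cdot w\s_2^{-6k}$. Lemma~\ref{lem:pos} is tailored exactly to show that the prefix up through $\s_2^{u'}\s_1$ represents a $1$-positive braid (using $u' > 1$), and the remaining factor $w\s_2^{-6k}$ has no negative-exponent occurrences of $\s_1$, so the whole product is $1$-positive. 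The main obstacle is not conceptual but organizational: one must be careful to arrange every $\s_2$-conjugation as a sixth power so that everything stays inside $K_n$, and one must check that the inverse of each basis element from Lemma~\ref{lem:gen} really does produce a prefix of the precise alternating form demanded by Lemma~\ref{lem:pos}.
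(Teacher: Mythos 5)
Your proposal is correct and follows essentially the same route as the paper's own proof: the same two-case split on whether $\beta$ commutes with $\s_2$, the same use of Lemma~\ref{lem:fix} to replace conjugation by $\s_2$ with conjugation by sixth powers so as to stay inside $K_n$, and the same application of Lemma~\ref{lem:pos} to the alternating words obtained by substituting $x=\s_1\s_2^{-1}$, $y=\s_1^2\s_2^{-2}$ into the inverses of the basis elements from Lemma~\ref{lem:gen}. The organizational points you flag (arranging $u'>1$ and absorbing the trailing $\s_2^{-6k}$ into $w$) are handled in the paper exactly as you describe.
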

\begin{proof}
We proceed similarly to Theorem \ref{th:1}.  Suppose that $C \subset K_n$ is a nontrivial, convex subgroup, and let $\beta \in C$ be a $1$-positive braid.  Denote the generators of $K_n$ by $g_1, g_2, \cdots , g_n$, from Lemma \ref{lem:gen} we know that $g_i >1$ for all $i$.    There are two cases to consider.

\textbf{Case 1.} The braid $\beta$ commutes with $\s_2$.  In this case, we proceed as in Case 1 of Theorem \ref{th:1}, to conclude that $\beta$ must be cofinal in the Dehornoy ordering.  Thus, we can find an integer $k$ so that $\beta^k > g_i > 1$ for every generator $g_i$ of $K_n$.  Then $g_i \in C$ for all $i$, and we conclude $C = K_n$.

\textbf{Case 2.} Suppose that $\beta$ and $\s_2$ do not commute, and we proceed as in Case 2 of Theorem \ref{th:1}.  Then, by the subword property of the Dehornoy ordering, we know that $\beta \s_2^k \beta^{-1} > 1$ for all $k>0$, and hence  $\beta \s_2^k \beta^{-1} \s_2^{-k} >1$ as well.  We deduce that $1< \beta \s_2^k \beta^{-1} \s_2^{-k} < \beta$ for all $k >0$ as before.    However, the braid $\beta \s_2^k \beta^{-1} \s_2^{-k}$ is not necessarily an element of $K_n$, but as conjugation by $\s_2^6$ preserves $K_n$ by Lemma \ref{lem:fix}, we have $ \beta \s_2^{6k} \beta^{-1} \s_2^{-6k} \in K_n$ for all $k >0$.  Hence, the inequality $1< \beta \s_2^k \beta^{-1} \s_2^{-k} < \beta$ yields $ \beta \s_2^{6k} \beta^{-1} \s_2^{-6k} \in C$ for all $k>0$.   We conclude that $ \s_2^{6k} \beta^{-1} \s_2^{-6k} \in C$ for all $k>0$.

 Proceeding as in the proof of Theorem \ref{th:1}, we may conjugate $\beta$ by an appropriate (sixth) power of $\s_2$ to conclude that the convex subgroup $C$ in $K_n$ contains a braid represented by a word of the form  $\s_2^u \s_1 w$, where $u>1$, and $w$ is a $1$-positive, $1$-neutral or empty word.  Then for each generator $g_i$ of $K_n$, consider the braid represented by the word $g_i^{-1} \s_2^u \s_1 w$.
As each $g_i$ contains only positive powers of the braids $x=\s_1 \s_2^{-1}$ and $y=\s_1^2 \s_2^{-2}$, we see that $g_i^{-1} \s_2^u \s_1$ represents a $1$-positive braid, by Lemma \ref{lem:pos}.  Therefore, the braid $g_i^{-1} \s_2^u \s_1 w$ is $1$-positive, and we conclude that $1 < g_i < \s_2^u \s_1 w \in C$, hence $g_i \in C$ for all $i$, and $C = K_n$.
\end{proof}

\bibliographystyle{plain}
\bibliography{candidacy}

\end{document}